\begin{document}

\newtheorem{theorem}[subsection]{Theorem}
\newtheorem{proposition}[subsection]{Proposition}
\newtheorem{lemma}[subsection]{Lemma}
\newtheorem{corollary}[subsection]{Corollary}
\newtheorem{conjecture}[subsection]{Conjecture}
\newtheorem{prop}[subsection]{Proposition}
\numberwithin{equation}{section}
\newcommand{\mr}{\ensuremath{\mathbb R}}
\newcommand{\mc}{\ensuremath{\mathbb C}}
\newcommand{\dif}{\mathrm{d}}
\newcommand{\intz}{\mathbb{Z}}
\newcommand{\ratq}{\mathbb{Q}}
\newcommand{\natn}{\mathbb{N}}
\newcommand{\comc}{\mathbb{C}}
\newcommand{\rear}{\mathbb{R}}
\newcommand{\prip}{\mathbb{P}}
\newcommand{\uph}{\mathbb{H}}
\newcommand{\fief}{\mathbb{F}}
\newcommand{\majorarc}{\mathfrak{M}}
\newcommand{\minorarc}{\mathfrak{m}}
\newcommand{\sings}{\mathfrak{S}}
\newcommand{\fA}{\ensuremath{\mathfrak A}}
\newcommand{\mn}{\ensuremath{\mathbb N}}
\newcommand{\mq}{\ensuremath{\mathbb Q}}
\newcommand{\half}{\tfrac{1}{2}}
\newcommand{\f}{f\times \chi}
\newcommand{\summ}{\mathop{{\sum}^{\star}}}
\newcommand{\chiq}{\chi \bmod q}
\newcommand{\chidb}{\chi \bmod db}
\newcommand{\chid}{\chi \bmod d}
\newcommand{\sym}{\text{sym}^2}
\newcommand{\hhalf}{\tfrac{1}{2}}
\newcommand{\sumstar}{\sideset{}{^*}\sum}
\newcommand{\sumprime}{\sideset{}{'}\sum}
\newcommand{\sumprimeprime}{\sideset{}{''}\sum}
\newcommand{\shortmod}{\ensuremath{\negthickspace \negthickspace \negthickspace \pmod}}
\newcommand{\V}{V\left(\frac{nm}{q^2}\right)}
\newcommand{\sumi}{\mathop{{\sum}^{\dagger}}}
\newcommand{\mz}{\ensuremath{\mathbb Z}}
\newcommand{\leg}[2]{\left(\frac{#1}{#2}\right)}
\newcommand{\muK}{\mu_{\omega}}
\newcommand{\sumflat}{\sideset{}{^\flat}\sum}
\newcommand{\thalf}{\tfrac12}
\newcommand{\lam}{\lambda}

\title[Mean squares of real character sums]{Mean squares of quadratic twists of the M\"obius function}

\author[P. Gao]{Peng Gao}
\address{School of Mathematical Sciences, Beihang University, Beijing 100191, China}
\email{penggao@buaa.edu.cn}

\author[L. Zhao]{Liangyi Zhao}
\address{School of Mathematics and Statistics, University of New South Wales, Sydney NSW 2052, Australia}
\email{l.zhao@unsw.edu.au}

\begin{abstract}
In this paper, we evaluate asymptotically the sum
\[ \sum_{d \leq X} \left( \sum_{n \leq Y} \mu(n)\leg {8d}{n} \right)^2, \]
where $\leg {8d}{n}$ is the Kronecker symbol and $d$ runs over positive, odd, square-free integers.
\end{abstract}

\maketitle

\noindent {\bf Mathematics Subject Classification (2010)}: 11N37, 11L05, 11L40 \newline

\noindent {\bf Keywords}: mean square, quadratic Dirichlet character, M\"obius function
\section{Introduction}

   Let $\mu$ denote the M\"obius function and the corresponding Mertens function $M(x)$ is defined to be
\begin{align*}
 M(x) = \sum_{n \leq x}\mu(n).
\end{align*}

The size of $M(x)$ is inextricably connected with the Riemann hypothesis (RH).  It is known (see \cite{Sound09-1}) that RH is equivalent to
\begin{align}
\label{Mbound}
  M(x) \ll x^{1/2+\varepsilon},
\end{align}
  for any $\varepsilon>0$. \newline

There have been a number of subsequent refinements of the bounds in \eqref{Mbound}, all under RH.  In \cite{Landau24}, E. Landau proved that
\eqref{Mbound} is valid with $\varepsilon \ll \log \log \log x/\log \log x$.  This bound was improved to $\varepsilon \ll 1/\log \log x$ by
E. C. Titchmarsh \cite{Titchmarsh27},  to $\varepsilon \ll (\log x)^{-22/61}$ by H. Maier and H. L. Montgomery \cite{MM09} and by
K. Soundararajan \cite{Sound09-1}
to
\begin{align*}
  M(x) \ll x^{1/2}\exp \big ((\log x)^{1/2}(\log \log \log x)^{14} \big ).
\end{align*}
The power $(\log \log \log x)^{14}$ in the above expression has been improved to $(\log \log \log x)^{5/2+\varepsilon}$ for any $\varepsilon >0$ by
M. Balazard and A. de Roton in \cite{BR08} upon refining the method of Soundararajan. \newline

 One may consider more generally the sum with the M\"obius function twisted by a Dirichlet character $\chi$ modulo $q$. More precisely,
we define
\begin{align*}
M(x, \chi) = \sum_{n \leq x}\mu(n)\chi(n).
\end{align*}
Similar to the relation between $M(x)$ and RH, the size of $M(x, \chi)$ is related to the generalized Riemann hypothesis (GRH) of the corresponding Dirichlet
$L$-function $L(s, \chi)$. It follows from Perron’s formula that GRH implies that
\begin{align}
\label{Mchibound}
M(x, \chi) \ll x^{1/2+\varepsilon},
\end{align}
for any $\varepsilon >0$.  Conversely, \eqref{Mchibound} gives, via partial summation, the convergence of the Dirichlet series
of $1/L(s, \chi)$ for any $s > 1/2$, and therefore GRH for $L(s, \chi)$.  While studying
sums of the M\"obius function in arithmetic progressions, L. Ye \cite{Ye} established that under
GRH, uniformly for $q$ and $x$,
\begin{align*}
M(x, \chi) \ll x^{1/2}\exp \big ((\log x)^{3/5+o(1)}\big ).
\end{align*}
  This improved an earlier result of K. Halupczok and B. Suger \cite[Lemma 1,2]{H&S13}.  Moreover, it follows from a general result of H. Maier and A. Sankaranarayanan \cite{M&S16} on multiplicative M\"obius-like functions that $\varepsilon \ll 1/\log \log x$ in \eqref{Mchibound} under GRH, which is comparable to the above mentioned result of Titchmarsh \cite{Titchmarsh27} on $M(x)$. \newline

  As noted in \cite{MM09}, the behavior of $M(x)$  depends both on the distribution
of $|\zeta'( \rho)|$ as $\rho = 1/2 + i\gamma$ runs over the non-trivial zeros of the Riemann zeta function $\zeta(s)$ (under RH), and on the linear independence of the $\gamma$. This makes it difficult to predict the behavior of $M(x)$ in any finer way. For example, a well-known conjecture of Mertens claiming that $|M(x)| \leq \sqrt{x}$ was disproved by A. M. Odlyzko and H. J. J. te Riele \cite{O&R}. In connection to this, one also has the weak Mertens conjecture which asserts that
\begin{align*}
 \int\limits^X_2 \Big ( \frac {M(x)}{x} \Big )^2 \dif x \ll \log X.
\end{align*}

  In \cite[Theorem 3]{Ng04}, N. Ng proved that as $X \rightarrow \infty$, for some constant $c$,
\begin{align}
\label{Msquareest}
 \int\limits^X_2 \Big ( \frac {M(x)}{x} \Big )^2 \dif x \sim c\log X,
\end{align}
  provided that one assumes RH and
\begin{align*}
  \sum_{0<\Im(\rho)\le T}|\zeta'(\rho)|^{-2} \ll T.
\end{align*}

  One may interpret \eqref{Msquareest} as a mean square type of estimation for $M(x)$ and in this situation one is able to evaluate the average asymptotically. We are thus motivated to seek for other mean square estimations involving the M\"obius function and it is the aim of our paper to study one such case. \newline

  To state our result, we write $\chi_{d}$ for the Kronecker symbol $\leg {d}{\cdot}$ and note that if $d$ is odd and square-free, $\chi_{8d}$ is a primitive Dirichlet character.  We are interested in the following sum
\begin{align*}
    S(X,Y)=\sumstar_{\substack {0< d \leq X}}  M(Y, \chi_{8d} )^2,
\end{align*}
   where the asterisk indicates that $d$ runs over odd and square-free integers. \newline

We may view $S(X,Y)$ as a mean square expression involving $M(Y, \chi_{8d})$ and one expects an asymptotic expression for it.  In fact, it is not difficult to obtain one if $Y^2<X$ using the P\'olya-Vinogradov inequality to control the contribution of the off-diagonal terms.  The situation is more intriguing for larger $Y$'s, especially if $X$ and $Y$ are of comparable size.  For instance, the sum
\begin{align*}
  \sum_{\substack {m \leq X \\ (m, 2)=1}}\sum_{\substack {n \leq Y \\ (n, 2)=1}} \leg {m}{n}
\end{align*}
 can be evaluated asymptotically if $Y=o(X/\log X)$ or $X=o(Y/\log Y)$ using the P\'olya-Vinogradov inequality. In \cite{CFS}, J. B. Conrey, D. W. Farmer and K. Soundararajan applied a Poisson summation formula developed by Soundararajan in \cite{sound1} to obtain an asymptotic formula for the other ranges.  We also note here that extensions and generalizations of this problem were studied by the authors in \cites{G&Zhao2019, G&Zhao2020, G&Zhao2022}. \newline

In studying $S(X,Y)$, we shall also utlize the Poisson summation formula given in \cite{sound1} as well as the techniques developed by  K. Soundararajan and M. P. Young \cite{S&Y} in their work on the second moment of quadratic twists of modular $L$-functions.  For technical reasons, we consider smoothed sums instead. We thus fix two non-negative, smooth functions $\Phi(x), W(x)$ that are compactly supported on ${\mr}_+=(0,\infty)$.  Set
\begin{align} \label{SXYPW}
    S(X,Y; \Phi, W)=\sumstar_{\substack {d}} \Big(\sum_{n} \mu(n)\chi_{8d}(n)\Phi \Big(\frac nX \Big)\Big)^2 W \Big(\frac d{X} \Big).
\end{align}

  We shall evaluate $S(X,Y; \Phi, W)$ asymptotically as follows.
\begin{theorem}
\label{meansquare}
   With the notation as above and assuming the truth of GRH, for large $X$ and $Y$, we have
\begin{align}
\label{S}
\begin{split}
   S(X,Y; \Phi, W)=& \frac{4}{\pi^2}  XY \widetilde{h}_1(1,1) Z_2(1)  + O\left( X^{1/2+\varepsilon}Y^{3/2+\varepsilon}+XY^{1/2+\varepsilon} \right),
\end{split}
\end{align}
where $\widetilde{h}_1(1,1)$ is given in \eqref{h1half} and the function $Z_2(u)$ is defined in \eqref{eq:Z(u,v)}.
\end{theorem}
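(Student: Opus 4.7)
The plan is to expand the inner square in \eqref{SXYPW} and swap the orders of summation, writing
\[
S(X,Y;\Phi,W)=\sum_{m,n}\mu(m)\mu(n)\Phi\Bigl(\frac{m}{Y}\Bigr)\Phi\Bigl(\frac{n}{Y}\Bigr)\sumstar_{d}\chi_{8d}(mn)W\Bigl(\frac{d}{X}\Bigr).
\]
Since $\mu(m)\mu(n)\neq 0$ forces $m,n$ to be squarefree, and $\chi_{8d}(mn)$ coincides with the principal character precisely when $mn$ is a perfect square---which, for squarefree $m,n$ coprime to $8d$, amounts to $m=n$---I split the analysis into a diagonal contribution (where $m=n$) and an off-diagonal contribution (where $mn$ is not a square).

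For the diagonal, after enforcing squarefreeness of $d$ via $\sum_{\alpha^{2}\mid d}\mu(\alpha)$ and the coprimality $(d,2m)=1$ by multiplicativity, the inner $d$-sum reduces to a smoothly weighted count of odd squarefree integers coprime to $m$, evaluating to $\tfrac{4}{\pi^{2}}X\widetilde{W}(1)\prod_{p\mid m}\tfrac{p}{p+1}$ up to a power-saving error. Inserting this into the $m$-sum and invoking a Mellin-transform representation of $\Phi^{2}$ converts the resulting series into a Dirichlet series of the shape $\sum_{(m,2)=1}\mu(m)^{2}m^{-2s}\prod_{p\mid m}p/(p+1)$, whose Euler product factors as $\zeta(2s)$ times a piece absolutely convergent for $\mathrm{Re}(s)>\tfrac14$, which I expect to match $Z_{2}(s)$ up to tame factors. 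Shifting the contour to $\mathrm{Re}(s)=\tfrac12+\varepsilon$ picks up a simple pole at $s=\tfrac12$ coming from $\zeta(2s)$, producing the main term $\tfrac{4}{\pi^{2}}XY\widetilde{h}_{1}(1,1)Z_{2}(1)$ of \eqref{S}, while the shifted integral is absorbed into the error $O(XY^{1/2+\varepsilon})$.

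The off-diagonal piece is the technical heart of the argument. I apply the Poisson summation formula for quadratic characters from \cite{sound1} to convert $\sumstar_{(d,2mn)=1}\chi_{8d}(mn)W(d/X)$ into a dual sum over integers $k$ weighted by Gauss sums $G_{k}(mn)$ and the Mellin transform of $W$. The $k=0$ contribution vanishes when $mn$ is not a square. Following the strategy of Soundararajan--Young \cite{S&Y}, I then express the resulting triple sum as a double Mellin integral of the Dirichlet series $D(s,w;k)=\sum_{m,n}\mu(m)\mu(n)G_{k}(mn)m^{-s}n^{-w}$; using the multiplicativity of $G_{k}$ and quadratic reciprocity, this factors as $L(s,\chi_{k_{0}})^{-1}L(w,\chi_{k_{0}})^{-1}H(s,w;k)$, where $k_{0}$ is the squarefree kernel of $k$ and $H$ is an Euler product absolutely convergent for $\mathrm{Re}(s),\mathrm{Re}(w)>\tfrac12$. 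Shifting both contours to $\tfrac12+\varepsilon$, invoking the GRH bound $|L(\sigma+it,\chi_{k_{0}})^{-1}|\ll(k_{0}(1+|t|))^{\varepsilon}$, and summing in $k$ against the rapid decay of the dual weight yields the error term $O(X^{1/2+\varepsilon}Y^{3/2+\varepsilon})$. The principal obstacle is performing these contour shifts uniformly in the conductor $k_{0}$, which may be as large as $Y^{2}$, while ensuring that no spurious residues contribute and that the polynomial growth of the Gauss sums is absorbed by the Schwartz decay of $\widetilde{W}$ together with the GRH-based estimate on $1/L(s,\chi)$.
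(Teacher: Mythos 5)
Your proposal captures the right ingredients --- Poisson summation à la Soundararajan \cite{sound1}, a factorization of the resulting Dirichlet series into inverse $L$-functions times an absolutely convergent Euler product as in Soundararajan--Young \cite{S&Y}, and GRH-conditional bounds for $1/L(\tfrac12+\varepsilon+it,\chi)$ --- and your main term computation via the diagonal/$k=0$ contribution is essentially the paper's. However, there is a genuine gap in the off-diagonal analysis that prevents the error term $O(X^{1/2+\varepsilon}Y^{3/2+\varepsilon})$ from emerging.

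The issue is the handling of the M\"obius inversion variable $\alpha$ used to remove the square-free condition on $d$. You write $\sumstar_{(d,2mn)=1}\chi_{8d}(mn)W(d/X)$ and apply Poisson directly, but the Poisson formula (Lemma~\ref{lem2}) applies to sums over all odd $d$, not square-free $d$; so you must first write $\sumstar_{\ell}=\sum_{\alpha}\mu(\alpha)\sum_{d}$, and then the dual frequency in $\widetilde{W}$ acquires a factor $\alpha^2$. The sum over $\alpha$ then cannot simply be carried to infinity: after shifting the $s$-contour to the right of $\Re(s)=1/2$ (which is required to exploit the decay of $\widetilde{W}$), one is left with $\sum_\alpha\alpha^{2\Re(s)-2}$, which diverges. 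The paper circumvents this by truncating at $\alpha\le Z$, estimating the tail $\alpha>Z$ directly under GRH (the piece $S_2(h)$, which gives $O((XY)^{1+\varepsilon}/Z)$), and introducing a second cutoff $K$ in the dual $k$-sum. After bounding the $k\ne0$ contribution by $\ll(XYZ)^{\varepsilon}Y^2Z$, the optimization $Z=(X/Y)^{1/2}$ is precisely what produces $X^{1/2+\varepsilon}Y^{3/2+\varepsilon}$. Your proposal omits both the truncation in $\alpha$ and the GRH estimate for the resulting tail, and thus has no mechanism to generate the stated error. (As a smaller point, the effective range of the dual conductor is governed by $K=Y^2Z^2/X$, which after optimization is $\asymp Y$, not $Y^2$.)

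Apart from this, your outline of the diagonal contribution --- Mellin transform, $\zeta(2s)$ pole at $s=1/2$ producing $\tfrac{4}{\pi^2}XY\widetilde{h}_1(1,1)Z_2(1)$, and the shifted integral contributing $O(XY^{1/2+\varepsilon})$ --- agrees with the paper's treatment of $S_{1,0}(h)$ up to a change of variables.
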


   One checks that \eqref{S} gives a valid asymptotic formula if $Y \ll X^{1-\varepsilon}$ for any $\varepsilon>0$.

\section{Preliminaries}
\label{sec 2}

  We gather first a few auxiliary results necessary in the proof of Theorem \ref{meansquare} in this section.
\subsection{Gauss sums}
\label{section:Gauss}

  For all odd integers $k$ and all integers $m$, define the Gauss-type sums $G_m(k)$, as in \cite[Sect. 2.2]{sound1},
\begin{align}
\label{G}
    G_m(k)=
    \left( \frac {1-i}{2}+\left( \frac {-1}{k} \right)\frac {1+i}{2}\right)\sum_{a \shortmod{k}}\left( \frac {a}{k} \right) e \left( \frac {am}{k} \right), \quad \mbox{where} \quad e(x) = \exp (2 \pi i x) .
\end{align}
Let $\varphi(m)$ be the Euler totient of $m$. Our next result is taken from \cite[Lemma 2.3]{sound1} and evaluates $G_m(k)$.
\begin{lemma}
\label{lem1}
   If $(k_1,k_2)=1$ then $G_m(k_1k_2)=G_m(k_1)G_m(k_2)$. Suppose that $p^a$ is
   the largest power of $p$ dividing $m$ (put $a=\infty$ if $m=0$).
   Then for $b \geq 1$ we have
\begin{equation*}
\label{011}
    G_m(p^b)= \left\{\begin{array}{cl}
    0  & \mbox{if $b\leq a$ is odd}, \\
    \varphi(p^b) & \mbox{if $b\leq a$ is even},  \\
    -p^a  & \mbox{if $b=a+1$ is even}, \\
    (\frac {m/p^a}{p})p^a\sqrt{p}  & \mbox{if $b=a+1$ is odd}, \\
    0  & \mbox{if $b \geq a+2$}.
    \end{array}\right.
\end{equation*}
\end{lemma}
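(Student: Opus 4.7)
The plan is to handle the two assertions separately. For multiplicativity, the strategy is to reduce to showing $\tilde{G}_m(k_1 k_2) = \leg{k_1}{k_2}\leg{k_2}{k_1} \tilde{G}_m(k_1) \tilde{G}_m(k_2)$, where $\tilde{G}_m(k)$ denotes the raw character sum in \eqref{G} stripped of the prefactor $\epsilon_k := \tfrac{1-i}{2} + \leg{-1}{k}\tfrac{1+i}{2}$, and then to verify that $\epsilon_k$ absorbs the resulting sign. I would parametrize $a \bmod k_1 k_2$ via CRT as $a \equiv a_1 k_2 + a_2 k_1$ with $a_i$ running over residues mod $k_i$; this splits the Jacobi symbol, using $\leg{a}{k_1 k_2} = \leg{a_1 k_2}{k_1}\leg{a_2 k_1}{k_2} = \leg{k_2}{k_1}\leg{k_1}{k_2}\leg{a_1}{k_1}\leg{a_2}{k_2}$, and the additive character, as $e(am/(k_1 k_2)) = e(m a_1/k_1)\,e(m a_2 / k_2)$. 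Quadratic reciprocity converts the prefactor $\leg{k_1}{k_2}\leg{k_2}{k_1}$ into $(-1)^{(k_1-1)(k_2-1)/4}$, and a short check across the four patterns of $(k_1, k_2) \bmod 4$ confirms $\epsilon_{k_1 k_2} = (-1)^{(k_1-1)(k_2-1)/4}\,\epsilon_{k_1}\epsilon_{k_2}$, so that $G_m(k_1 k_2) = G_m(k_1)G_m(k_2)$.

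For the prime-power evaluation, I would write $m = p^a m'$ with $(m', p) = 1$ and use $\leg{a}{p^b} = \leg{a}{p}^b$ to separate by the parity of $b$. When $b \leq a$, the additive character $e(am/p^b)$ is identically $1$, so the sum reduces to $\sum_{a \bmod p^b,\, p \nmid a} \leg{a}{p}^b$, producing $\varphi(p^b)$ for $b$ even and $0$ for $b$ odd via $\sum_{j=1}^{p-1}\leg{j}{p} = 0$. When $b = a+1$, I would group units $a \bmod p^b$ by their residue mod $p$, with each class contributing multiplicity $p^{b-1}$; this leaves either the Ramanujan-type sum $\sum_{j=1}^{p-1} e(jm'/p) = -1$ (for $b$ even, yielding $-p^a$) or the quadratic Gauss sum $\sum_{j=1}^{p-1}\leg{j}{p} e(jm'/p) = \leg{m'}{p}\,g_p$, where $g_p = \sqrt{p}$ or $i\sqrt{p}$ according as $p \equiv 1$ or $3 \pmod 4$. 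Since $\epsilon_{p^b} = \epsilon_p$ when $b$ is odd and $\epsilon_p\, g_p = \sqrt{p}$ in both sub-cases, this recovers the stated $\leg{m/p^a}{p}\, p^a \sqrt{p}$. For $b \geq a + 2$, I would split $a = j + pt$ with $j \bmod p$ and $t \bmod p^{b-1}$; the inner sum $\sum_{t \bmod p^{b-1}} e(t m'/p^{b-a-1})$ is a complete geometric sum whose exponent modulus $p^{b-a-1} \geq p$ does not divide $m'$, so it vanishes.

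The main bookkeeping hurdle is the multiplicativity step: the reciprocity sign $(-1)^{(k_1-1)(k_2-1)/4}$ must cancel exactly against the case-by-case behavior of $\epsilon_k$ on odd $k$ modulo $4$ (where $\epsilon_k$ equals either $1$ or $-i$). Once that cancellation is verified, the local table follows by a clean case analysis leveraging $\leg{a}{p}^b$ to toggle the parity of $b$, together with the classical values of the Ramanujan sum $c_p(m')$ and the quadratic Gauss sum.
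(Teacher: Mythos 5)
Your proof is correct, and the verifications all check out: the CRT splitting plus Jacobi reciprocity, the sign identity $\epsilon_{k_1k_2}=(-1)^{(k_1-1)(k_2-1)/4}\epsilon_{k_1}\epsilon_{k_2}$, and the prime-power cases (reduction mod $p$, the Ramanujan sum $-1$, the classical Gauss sum $\leg{m'}{p}g_p$ with $\epsilon_p g_p=\sqrt{p}$, and the vanishing complete character sum when $b\ge a+2$). Note, however, that the paper offers no proof of its own: the lemma is quoted verbatim from Soundararajan \cite[Lemma 2.3]{sound1}, and your argument is essentially the standard computation given there, so there is no methodological divergence to compare; the only cosmetic point is that in the even-$b$ cases you should say explicitly that $\epsilon_{p^b}=1$ (since $p^b\equiv 1 \pmod 4$), which you use implicitly.
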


\subsection{Poisson Summation}
    For any smooth function $F$, we write $\hat{F}$ for the Fourier transform of $F$ and we define
\begin{equation} \label{tildedef}
   \widetilde{F}(\xi)=\frac {1+i}{2}\hat{F}(\xi)+\frac
   {1-i}{2}\hat{F}(-\xi)=\int\limits^{\infty}_{-\infty}\left(\cos(2\pi \xi
   x)+\sin(2\pi \xi x) \right)F(x) \dif x.
\end{equation}

    We note the following Poisson summation formula from \cite[Lemma 2.6]{sound1}.
\begin{lemma}
\label{lem2}
   Let $W$ be a smooth function compactly supported on ${\mr}_+$. We have, for any odd integer $n$,
\begin{equation*}
\label{013}
  \sum_{(d,2)=1}\left( \frac {d}{n} \right)
    W\left( \frac {d}{X} \right)=\frac {X}{2n}\left( \frac {2}{n} \right)
    \sum_k(-1)^kG_k(n)\widetilde{W}\left( \frac {kX}{2n} \right),
\end{equation*}
where $\widetilde{W}$ is defined in \eqref{tildedef} and $G_k(n)$ is defined in \eqref{G}.
\end{lemma}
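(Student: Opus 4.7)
The plan is to reduce the twisted sum over odd $d$ to a finite exponential sum modulo $2n$ via classical Poisson summation, and then identify the resulting object with the Gauss-type quantity $G_k(n)$. Since $n$ is odd and positive, the Jacobi symbol $d\mapsto\left(\frac{d}{n}\right)$ is periodic modulo $n$ and the condition $(d,2)=1$ is periodic modulo $2$, so the whole summand is periodic modulo $2n$. Splitting into residue classes and applying the classical Poisson summation formula to each arithmetic progression (valid since $W$ is smooth with compact support in $\mr_+$, and hence $\hat W$ is Schwartz) gives
\[
\sum_{(d,2)=1}\left(\frac{d}{n}\right)W\!\left(\frac{d}{X}\right)=\frac{X}{2n}\sum_{k\in\intz}\hat W\!\left(\frac{kX}{2n}\right)T_n(k),
\]
where $T_n(k)=\sum_{\substack{a\bmod 2n\\(a,2)=1}}\left(\frac{a}{n}\right)e\!\left(\frac{ak}{2n}\right)$.

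Since $n$ is odd, every odd residue modulo $2n$ is uniquely of the form $a=n+2c$ with $c$ running over $\intz/n\intz$. Using the two elementary relations $\left(\frac{n+2c}{n}\right)=\left(\frac{2}{n}\right)\left(\frac{c}{n}\right)$ and $e\!\left((n+2c)k/(2n)\right)=(-1)^ke(ck/n)$, the sum $T_n(k)$ collapses to $(-1)^k\left(\frac{2}{n}\right)\tau_n(k)$, where $\tau_n(k)=\sum_{c\bmod n}\left(\frac{c}{n}\right)e(ck/n)$ is the ordinary Gauss sum attached to the quadratic character mod $n$.

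To pass from $\tau_n(k)$ to $G_k(n)$ and from $\hat W$ to $\widetilde W$, I would pair the $k$- and $(-k)$-terms. Set $\varepsilon_n=\frac{1-i}{2}+\left(\frac{-1}{n}\right)\frac{1+i}{2}$, so that $G_k(n)=\varepsilon_n\tau_n(k)$ by \eqref{G}. A brief calculation yields $|\varepsilon_n|^2=1$ and $\varepsilon_n^2=\left(\frac{-1}{n}\right)$, together with the symmetries $\tau_n(-k)=\left(\frac{-1}{n}\right)\tau_n(k)$ and $G_{-k}(n)=\left(\frac{-1}{n}\right)G_k(n)$. Combining these with definition \eqref{tildedef}, one verifies the identity
\[
\hat W(\xi)+\left(\tfrac{-1}{n}\right)\hat W(-\xi)=\varepsilon_n\Bigl[\widetilde W(\xi)+\left(\tfrac{-1}{n}\right)\widetilde W(-\xi)\Bigr],
\]
from which the paired $k\leftrightarrow -k$ contributions on the right-hand side become $(-1)^kG_k(n)\widetilde W(kX/(2n))+(-1)^{-k}G_{-k}(n)\widetilde W(-kX/(2n))$. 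Gathering terms and multiplying by $\frac{X}{2n}\left(\frac{2}{n}\right)$ produces the stated formula.

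The main obstacle is the bookkeeping in the symmetrisation step: the factors $(-1)^k$, $\left(\frac{\pm 1}{n}\right)$, and the complex weights $\frac{1\pm i}{2}$ must be tracked carefully through the pairing $k\leftrightarrow -k$ so that the $\hat W$-expression collapses cleanly into the $\widetilde W$-expression defined by \eqref{tildedef}. All other ingredients---periodicity of the Jacobi symbol, the classical Poisson summation formula, and the elementary evaluation of $T_n(k)$---are routine.
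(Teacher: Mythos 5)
Your proof is correct and is essentially the standard argument: the paper does not prove this lemma itself but quotes it from Soundararajan's work, where the proof runs exactly as you describe --- split into odd residue classes modulo $2n$, apply classical Poisson summation, substitute $a=n+2c$ to extract $\leg{2}{n}(-1)^k$ times the Gauss sum $\tau_n(k)$, and symmetrise $k\leftrightarrow -k$ via $\varepsilon_n=\frac{1-i}{2}+\leg{-1}{n}\frac{1+i}{2}$ to convert $\hat W$ into $\widetilde W$. The bookkeeping you flag does close up, since $\tau_n(-k)=\leg{-1}{n}\tau_n(k)$ and $\varepsilon_n\left(\frac{1+i}{2}+\leg{-1}{n}\frac{1-i}{2}\right)=1$ in both cases $n\equiv \pm 1 \pmod 4$ (with the $k=0$ term matching trivially because $\widetilde W(0)=\hat W(0)$ and $G_0(n)=\tau_n(0)$), so the paired contributions collapse exactly to $(-1)^k G_k(n)\widetilde W\left(\frac{kX}{2n}\right)$.
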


\subsection{Upper bounds for $|L(s, \chi)|^{-1}$}

From \cite[Theorem 5.19]{iwakow}, we deduce the following.
\begin{lemma}
\label{lem:Linversebound}
 Assume the truth of GRH. For any Dirichlet character $\chi$ modulo $q$ and any $\varepsilon > 0$, we have
\begin{equation*}
\label{eq:H-B}
 \big|L(\tfrac 12+\varepsilon+it, \chi)\big|^{-1} \ll \big(q(1+|t|)\big)^{\varepsilon},
\end{equation*}
  where the implied constant depends on $\varepsilon$ alone.
\end{lemma}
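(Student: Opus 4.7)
The plan is to extract the stated inequality directly from Theorem 5.19 of Iwaniec--Kowalski, as indicated in the lemma's preamble. That theorem provides, under GRH, quantitative upper bounds on $\log L(s,\chi)$ on vertical lines strictly to the right of the critical line, with explicit dependence on the analytic conductor of $L(s,\chi)$, which for a Dirichlet character of modulus $q$ is $\asymp q(2+|t|)$.

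Concretely, I would proceed in two small steps. First, recall that IK Theorem 5.19, applied with $\sigma = 1/2 + \varepsilon$, yields under GRH an estimate of the shape
\begin{align*}
|\log L(\tfrac12 + \varepsilon + it,\chi)| = o_\varepsilon\bigl(\log (q(2+|t|))\bigr),
\end{align*}
in fact a bound of polylogarithmic type in $q(2+|t|)$. Second, apply the elementary inequality
\begin{align*}
\log |L(s,\chi)|^{-1} = -\Re \log L(s,\chi) \leq |\log L(s,\chi)|
\end{align*}
to conclude
\begin{align*}
|L(\tfrac12+\varepsilon + it,\chi)|^{-1} \leq \exp|\log L(\tfrac12+\varepsilon+it,\chi)| \ll_\varepsilon (q(1+|t|))^\varepsilon,
\end{align*}
with considerable room to spare. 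The implied constant depends only on $\varepsilon$, since the conductor dependence in IK's bound is uniform.

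There is essentially no obstacle in this argument: the lemma is a direct specialization of a well-known consequence of GRH, and the only book-keeping needed is to translate the analytic conductor used in IK to the arithmetic conductor $q$, which is routine for Dirichlet $L$-functions. The lemma is presumably recorded in this loose polynomial form — rather than with the sharper polylogarithmic bound that GRH actually affords — because the polynomial-in-conductor shape is precisely what is needed later in the paper when one integrates $|L(\tfrac12+\varepsilon+it,\chi)|^{-1}$ against rapidly decaying test functions on the line $\Re s = \tfrac12 + \varepsilon$.
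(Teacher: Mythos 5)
Your proposal follows exactly the route the paper intends: the paper simply states ``From \cite[Theorem 5.19]{iwakow}, we deduce the following'' without writing out the deduction, and your two steps (the GRH bound on $\log L(\tfrac12+\varepsilon+it,\chi)$ in terms of the analytic conductor, then exponentiating $-\Re\log L \le |\log L|$) are precisely the standard way to make that deduction. The argument is correct, and your remark that the polynomial-in-conductor shape is all that is needed downstream is also accurate.
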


\subsection{Analytical behaviors of some Dirichlet Series}
   We define for any square-free $k_1$,
\begin{equation}
\label{eq:Z}
 Z(\alpha,\beta,\gamma;q,k_1) = \sum_{k_2=1}^{\infty} \sum_{(n_1,2q)=1} \sum_{(n_2,2q)=1} \frac{\mu(n_1)\mu(n_2)}{n_1^{\alpha} n_2^{\beta} k_2^{2\gamma}} \frac{G_{k_1 k_2^2}(n_1 n_2)}{n_1 n_2},
\end{equation}
 where $G_m(k)$ be defined as in \eqref{G}. Note first that Lemma \ref{lem1} implies that $Z(\alpha,\beta,\gamma;q,k_1)$ converges absolutely when $\Re(\alpha)$, $\Re(\beta)$, and $\Re(\gamma)$ are all strictly greater than $\tfrac 12$. We write $L_c(s, \chi)$ for the Euler product of $L(s, \chi)$ with the factors from $p | c$ removed.  Our next lemma describes the analytical behavior of $Z$.
\begin{lemma}
\label{lemma:Z}
 The function $Z(\alpha,\beta,\gamma;q,k_1)$ defined in \eqref{eq:Z} may be written as
 \begin{align}
\label{Z2def}
 L_q(\tfrac 12+\alpha, \chi_{k_1})^{-1}L_q(\tfrac 12+\beta,\chi_{k_1})^{-1}Z_{2}(\alpha,\beta,\gamma;q, k_1),
\end{align}
where $Z_{2}(\alpha,\beta,\gamma;q,k_1)$ is a function uniformly bounded in the region
$\Re(\gamma) \ge \frac 12+\varepsilon$, $\Re(\alpha), \Re(\beta) \geq \varepsilon$ for any $\varepsilon >0$.
\end{lemma}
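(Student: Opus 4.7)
The plan is to realize $Z(\alpha,\beta,\gamma;q,k_1)$ as an absolutely convergent Euler product in the stated region and to match its local factors against those of $L_q(\tfrac12+\alpha,\chi_{k_1})^{-1}L_q(\tfrac12+\beta,\chi_{k_1})^{-1}$. Since $G_m(\cdot)$ is multiplicative in its modulus by Lemma \ref{lem1} and $\mu(n_1)\mu(n_2)$ restricts each $v_p(n_i)$ to $\{0,1\}$, the triple sum in \eqref{eq:Z} factors over primes. Primes $p\mid 2q$ contribute only through $k_2$, yielding a factor $(1-p^{-2\gamma})^{-1}$, while for odd primes $p\nmid q$ the local factor reads
\begin{align*}
L_p = \sum_{j\geq 0}\sum_{i_1,i_2\in\{0,1\}}\frac{(-1)^{i_1+i_2}\,G_{p^{v_p(k_1)+2j}}(p^{i_1+i_2})}{p^{i_1\alpha+i_2\beta+2j\gamma+i_1+i_2}},
\end{align*}
with $j=v_p(k_2)$.

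I then evaluate $L_p$ using the table in Lemma \ref{lem1}, splitting by $\epsilon:=v_p(k_1)\in\{0,1\}$ (recall $k_1$ is square-free). When $\epsilon=0$, non-vanishing Gauss sums occur only at $(j,i_1+i_2)\in\{(0,0),(0,1),(\geq 1,0),(\geq 1,2)\}$; the $(0,1)$ branch uses $G_1(p)=\chi_{k_1}(p)\sqrt{p}$, valid because with $m=k_1k_2^2$ and $p\nmid m$ one has $\leg{m}{p}=\chi_{k_1}(p)$ (the $k_2^2$ factor is a unit square modulo $p$). Summing the resulting geometric series in $j$ produces a closed form for $L_p$, and combining with the expansion
\begin{align*}
(1-\chi_{k_1}(p)p^{-1/2-\alpha})(1-\chi_{k_1}(p)p^{-1/2-\beta}) = 1 - \chi_{k_1}(p)(p^{-1/2-\alpha}+p^{-1/2-\beta}) + \chi_{k_1}(p)^2 p^{-1-\alpha-\beta}
\end{align*}
together with $\chi_{k_1}(p)^2=1$ (for $p\nmid k_1$) yields
\begin{align*}
L_p = (1-\chi_{k_1}(p)p^{-1/2-\alpha})(1-\chi_{k_1}(p)p^{-1/2-\beta})(1+E_p),\qquad E_p = O(p^{-1-2\varepsilon})
\end{align*}
uniformly in $\Re(\gamma)\geq\tfrac12+\varepsilon$, $\Re(\alpha),\Re(\beta)\geq\varepsilon$. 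In the case $\epsilon=1$ the table forces $i_1+i_2$ to be even, so a direct computation gives $L_p=1+O(p^{-1-2\varepsilon})$, matching the $L$-function factors which are trivial since $\chi_{k_1}(p)=0$.

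Multiplying over all primes gives \eqref{Z2def} with
\begin{align*}
Z_2(\alpha,\beta,\gamma;q,k_1) = \prod_{p\mid 2q}(1-p^{-2\gamma})^{-1}\cdot\prod_{p\nmid 2q}(1+E_p),
\end{align*}
and the uniform boundedness of $Z_2$ in the stated region follows from the absolute convergence of the infinite product. The most delicate step is the first-order matching in the $\epsilon=0$ case: the simplification of the Kronecker symbol in the $b=a+1$ odd entry of Lemma \ref{lem1} is precisely what aligns the coefficients of $p^{-1/2-\alpha}$ and $p^{-1/2-\beta}$ in $L_p$ with those from the $L$-function factors, and the identity $\chi_{k_1}(p)^2=1$ is what lets the residual $p^{-1-\alpha-\beta}$ term be absorbed into $E_p$. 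Once this matching is secured, the rest of the argument is routine bookkeeping.
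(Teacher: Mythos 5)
Your proposal is correct and follows essentially the same route as the paper: both write $Z$ as an Euler product via the multiplicativity of $G_m$ from Lemma \ref{lem1}, compute the local factor at generic primes $p\nmid 2qk_1$ to extract the leading term $1-\chi_{k_1}(p)(p^{-1/2-\alpha}+p^{-1/2-\beta})$ (identifying $(\tfrac{k_1k_2^2}{p})=\chi_{k_1}(p)$ since the $k_2^2$ is a square unit), treat $p\mid k_1$ and $p\mid 2q$ separately, and conclude boundedness of $Z_2$ from the absolute convergence of the product of the $1+O(p^{-1-2\varepsilon})$ ratios. The only quibble is notational: your closing product formula for $Z_2$ omits the factor $(1-\chi_{k_1}(2)2^{-1/2-\alpha})^{-1}(1-\chi_{k_1}(2)2^{-1/2-\beta})^{-1}$ at $p=2$ coming from $L_q$ (which removes only $p\mid q$, not $p=2$), but this factor is uniformly bounded and bounded away from zero in the stated region, so the conclusion is unaffected — a sloppiness the paper itself shares.
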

\begin{proof}
We deduce from Lemma \ref{lem1} that the summand in \eqref{eq:Z} is jointly multiplicative in terms of $n_1, n_2$, and $k_2$, so that we can express $Z(\alpha,\beta,\gamma;q, k_1)$ as an Euler product over all primes $p$. It suffices to match each Euler factor at $p$ for  $Z(\alpha,\beta,\gamma;q,k_1)$ with the corresponding factor in \eqref{Z2def}. \newline

 The contribution of such an Euler factor for the generic case with $p \nmid 2q k_1$ is
\begin{equation*}
 \sum_{k_2, n_1, n_2} \frac{\mu(n_1)\mu(n_2)}{p^{n_1 \alpha + n_2 \beta +2k_2 \gamma}}\frac{ G_{k_1 p^{2k_2}}(p^{n_1 + n_2})}{p^{n_1+n_2}  }.
\end{equation*}
If $\Re(\gamma) \ge \tfrac 12+\varepsilon$, $\Re(\alpha)$, $\Re(\beta)\ge \varepsilon$,
Lemma \ref{lem1} implies that the contribution from the terms $k_2\ge 1$ is
$\ll 1/p^{1+2\varepsilon}$ and the contribution of the
term $k_2=0$ is $1-\chi_{k_1}(p) (p^{-1/2-\alpha}+p^{-1/2-\beta})$.  This calculation readily implies that this Euler factor for $Z$ matches the corresponding one in \eqref{Z2def}. \newline

Similarly,  when $\Re(\gamma)\ge \frac 12+\varepsilon$ and $\Re(\alpha)$, $\Re(\beta)\ge \varepsilon$, Lemma \ref{lem1} implies that
the Euler factor for $p | k_1$ but  $p \nmid 2q$ equals
$$
1-\frac{1}{p^{1+\alpha+\beta}}
 + O\left( \frac{1}{p^{1+\varepsilon}}\right)=1+O\left( \frac{1}{p^{1+\varepsilon}}\right).
$$

Lastly, the corresponding Euler factor for the case $p|2q$ is $(1-p^{-2\gamma})^{-1}= 1+O(1/p^{1+2\varepsilon})$.
The assertion of the lemma now follows from these computations.
\end{proof}

\section{Proof of Theorem \ref{meansquare}}
\label{sec 3}

\subsection{Decomposition of ${\mathcal S}(X,Y; \Phi, W)$}
\label{section:mainprop}

  Expanding the square in \eqref{SXYPW} allows us to recast ${\mathcal S}(X,Y; \Phi, W)$ as
\[
S(h):= \sumstar_{d} \sum_{n_1} \sum_{n_2} \chi_{8d}(n_1n_2)\mu(n_1)\mu(n_2) h(d,n_1,n_2),
\]
where $h(x,y,z)=W(\frac xX)\Phi \left(\frac {y}{Y} \right )\Phi \left(\frac {z}{Y} \right )$ is a smooth function on ${\mr}_+^3$.
We apply the M\"{o}bius inversion to remove the square-free condition on $d$ to obtain that, for an appropriate parameter $Z$ to be chosen later,
\begin{eqnarray*}
 S(h) = \Big(\sum_{\substack{a \leq Z \\ (a,2)=1}} + \sum_{\substack{a > Z \\ (a,2)=1}} \Big) \mu(a)  \sum_{(d,2)=1} \sum_{(n_1,a)=1} \sum_{(n_2,a)=1} \chi_{8d}(n_1n_2)\mu(n_1)\mu(n_2)h(da^2, n_1, n_2)
=:S_1(h)+ S_2(h), \quad \mbox{say}.
\end{eqnarray*}

\subsection{Estimating $S_2(h)$}
\label{section:S2}

  We first estimate $S_2(h)$.  To this end, writing $d=b^2 \ell$ with $\ell$ square-free, and grouping terms according to $c=ab$, we deduce
\begin{equation}
\label{eq:S21}
 S_2(h) = \sum_{(c,2)=1} \sum_{\substack{a > Z \\ a|c}} \mu(a)
 \sumstar_{\ell} \sum_{(n_1,c)=1} \sum_{(n_2,c)=1}
 \chi_{8\ell}(n_1 n_2)\mu(n_1)\mu(n_2) h(c^2 \ell, n_1, n_2).
\end{equation}

 Applying Mellin transforms in the variables $n_1$ and $n_2$ yeilds that the inner triple sum over $\ell$, $n_1$, and $n_2$ in \eqref{eq:S21} is
\begin{equation}
\label{eq:S22}
\frac{1}{(2\pi i)^2} \int\limits_{(1+\varepsilon)} \int\limits_{(1+\varepsilon)}
\sumstar_{\ell} {\check h}(c^2 \ell; u, v) \sum_{\substack{n_1, n_2 \\ (n_1n_2, c)=1}}
\frac{\chi_{8\ell}(n_1) \chi_{8\ell}(n_2)\mu(n_1)\mu(n_2) }{n_1^{u}n_2^{v} } \dif u \, \dif v,
\end{equation}
where
\begin{equation*}
{\check h}(x;u,v) = \int\limits_0^{\infty} \int\limits_0^{\infty} h(x,y,z) y^u z^v \frac{\dif y}{y} \frac{\dif z}{z}.
\end{equation*}

Now integration by parts gives that for $\Re(u)$, $\Re(v) >0$ and any positive integers $A_j$, $1 \leq j \leq 3$,
\begin{equation}
\label{eq:3.12}
{\check h}(x;u,v) \ll \left( 1 + \frac{x}{X} \right)^{-A_1} \frac{Y^{\Re(u)+\Re(v)}}{|uv|(1+|u|)^{A_2} (1 + |v|)^{A_3}}.
\end{equation}
  Note that the sum over $n_1$ and $n_2$ in \eqref{eq:S22} equals $L^{-1}_c(u,\chi_{8\ell}) L^{-1}_c(v, \chi_{8\ell})$ and we can thus
move the lines of integration in \eqref{eq:S22}
to $\Re(u)=\Re(v)=1/2+\varepsilon$ without encountering any poles under GRH.  Moreover,
\begin{align}
\label{Linversebound}
|L^{-1}_c(u, \chi_{8\ell})L^{-1}_c(v,\chi_{8\ell})|
\le d(c)^2 ( |L^{-1}(u, \chi_{8\ell})|^2 + |L^{-1}(v, \chi_{8\ell})|^2),
\end{align}
 where $d(c)$ denotes the value of the divisor function at $c$. \newline

  We now apply \eqref{eq:3.12} with $A_2=A_3=1$ and $A_1$ sufficiently large and Lemma \ref{lem:Linversebound} to get that the expression in \eqref{eq:S22} is
\begin{equation*}
 \ll d(c)^2 Y^{1+2\varepsilon} \int\limits_{-\infty}^{\infty} (1+|t|)^{-2} \sumstar_{\ell} \left(1 + \frac{c^2 \ell}{X}\right)^{-A_1}
\Big|L(\tfrac 12+\varepsilon +it,  \chi_{8\ell})\Big|^{-2}  \ \dif t \ll d(c)^2 (XY)^{1+\varepsilon} /c^2.
\end{equation*}

 We conclude from the above estimation and \eqref{eq:S21} that
\begin{align}
\label{S2}
S_2(h) \ll (XY)^{1 + \varepsilon} Z^{-1+ \varepsilon}.
\end{align}

\subsection{Estimating $S_1(h)$, the main term}
  We evaluate $S_1(h)$ now.  Write for brevity $C = \cos$ and $S = \sin$.  We then apply the Poisson summation formula, Lemma \ref{lem2}, to deduce that
\begin{align}
\label{eq:S1}
 S_1(h)= \frac{X}{2} \sum_{\substack{a \leq Z \\ (a,2)=1}} \frac{\mu(a)}{a^2} \sum_{k \in \mz}
\sum_{(n_1,2a)=1} \sum_{(n_2,2a)=1} \frac{(-1)^kG_k(n_1 n_2)\mu(n_1)\mu(n_2)}{n_1 n_2}\int\limits_0^{\infty} h(xX, n_1, n_2) (C + S)\leg{2\pi k xX}{2n_1 n_2 a^2} \dif x.
\end{align}

Let $S_{1,0}(h)$ for the terms in \eqref{eq:S1} with $k=0$.  Note that
$$
\sum_{\substack{a \leq Z \\ (a,2n_1n_2)=1}} \frac{\mu(a)}{a^2} = \frac{1}{\zeta(2)}
\prod_{p|2n_1n_2} \left( 1-\frac{1}{p^2}\right)^{-1} +O(Z^{-1}) = \frac{8}{\pi^2}\prod_{p|n_1n_2} \left(1-\frac{1}{p^2}\right)^{-1}+ O(Z^{-1}) .
$$
 Moreover, with $\square$ denoting a perfect square, Lemma \ref{lem1} implies that $G_0(m) = \varphi(m)$ if $m = \square$, and is zero otherwise.  Thus, upon setting $h_1(y,z) = \int_{\rear_+} h(xX,y,z) \ \dif x$, we infer that
\begin{align} \label{S10first}
 S_{1,0}(h) =  \frac{4X}{\pi^2}
\sum_{\substack{(n_1 n_2,2)=1 \\ n_1 n_2 = \square}}\mu(n_1)\mu(n_2) \prod_{p|n_1n_2} \left( \frac{p}{p+1}\right)
h_1\left( n_1, n_2\right)
 + O \Big( \frac XZ \sum_{\substack{(n_1 n_2,2)=1 \\ n_1 n_2 = \square}}
 \Big|\mu(n_1)\mu(n_2)h_1(n_1,n_2) \Big |\Big).
\end{align}

Mark that the definition of $h$ implies that $h_1 \ll 1$ and $h_1=0$ unless both $n_1$ and $n_2$ are $\ll Y$. Furthermore, if $n_1$, $n_2$ are square-free, then $n_1n_2=\square$ implies that $n_1=n_2$.  Consequently, the sum in the $O$-term in \eqref{S10first} is $\ll Y$ and
\begin{align*}
S_{1,0}(h) = \frac{4X}{\pi^2} \sum_{\substack{(n,2)=1}} \prod_{p|n} \left( \frac{p}{p+1}\right)
\mu^2(n) h_1\left( n, n \right) + O\left( \frac {XY}Z \right).
\end{align*}

  We now apply the Mellin transform to recast $h_1(n,n)$ as
\[  h_1(n,n) = \frac{1}{2\pi i} \int\limits_{(2)}  \frac{Y^u}{n^u}\widetilde{h}_1(u,u) \dif u, \quad \mbox{where} \quad \widetilde{h}_1(u,u) = \int\limits_{\mr_{+}} h_1(yY,yY) y^{u} \frac{\dif y}{y}. \]

   Similar to \eqref{eq:3.12}, we have that for $\Re(u)>0$ and any integer $B \geq 0$,
\begin{equation}
\label{h1bound}
 \widetilde{h}_1(u,u) \ll  \frac{1}{|u|(1+|u|)^{B}}.
\end{equation}
Now we can rewrite $S_{1,0}$ as
\begin{equation}
\label{eqn:4.4}
S_{1,0}(h)= \frac{4X}{\pi^2} \frac{1}{2\pi i} \int\limits_{(2)}  Y^u \widetilde{h}_1(u,u) Z(u)
\dif u + O\left( \frac {XY}Z  \right), \quad \mbox{where} \quad Z(u)  = \sum_{\substack{(n,2)=1 }} \frac{\mu^2(n)}{n^{u}}\prod_{p|n} \left( \frac{p}{p+1}\right).
\end{equation}
  We compute the Euler factors of $Z(u)$ to see that
 \begin{align}
 \label{eq:Z(u,v)}
  & Z(u)=\prod_{p >2} \left( 1+ \frac p{p+1} \cdot \frac {1}{p^{u}} \right) =: \zeta(u)Z_2(u),
 \end{align}
 where $Z_2(u)$ converges absolutely in the region $\Re(u) \geq \frac 12+\varepsilon$ for any $\varepsilon>0$. \newline

 Moving the line of integration in \eqref{eqn:4.4} to $\Re(u) = \frac 12+\varepsilon$, we encounter a simple pole at $u=1$ whose residue gives rise to the main term
 $$
 \frac{4}{\pi^2}  XY \widetilde{h}_1(1,1) Z_2(1).
 $$
Now to estimate the integral on the $\frac 12+\varepsilon$ line, we apply the functional equation for $\zeta(s)$ (see \cite[\S 8]{Da}) and Stirling's formula, together with the convexity bound for $\zeta(s)$, rendering
\begin{align*}
   \zeta(s) \ll \begin{cases}
   1 \qquad & \Re(s) >1,\\
   (1+|s|)^{(1-\Re(s))/2} \qquad & 0< \Re(s) <1,\\
    (1+|s|)^{1/2-\Re(s)} \qquad & \Re(s) \leq 0.
\end{cases}
\end{align*}
The above and \eqref{h1bound} with $B=1$ enable us to gather that the integral on the $\frac 12+\varepsilon$ line contributes $\ll XY^{1/2+\varepsilon}$.  One can easily check here that the Lindel\"of hypothesis, a consequence of GRH whose truth we assume, does not lead to a better bound.  Now the above discussion, together with \eqref{eqn:4.4}, implies that
\begin{align}
\label{S10}
 S_{1,0}(h) = \frac{4}{\pi^2}  XY \widetilde{h}_1(1,1) Z_2(1)  + O\left( \frac {XY}Z+XY^{1/2+\varepsilon}   \right).
\end{align}
Here we note that
\begin{align}
\label{h1half}
  \widetilde{h}_1(1,1)=\int\limits_{\mr}W(x) \dif x \left (\int\limits_{\mr} \Phi (y) \dif y \right )^2.
\end{align}

\subsection{Estimating $S_1(h)$, the $k \neq 0$ terms}
\label{section:3.3}
Let $S_3(h)$ denote the contribution to $S_1(h)$ from the terms with $k \neq 0$ in \eqref{eq:S1}. Let $f$ be a smooth function on $\mr_+$ with
rapid decay at infinity and $f$ itself and all its derivatives have finite limits as $x\to 0^+$.  We consider the transform given by
\begin{equation*}
 \widehat{f}_{CS}(y) := \int\limits_0^{\infty} f(x) CS(2\pi xy) \dif x,
\end{equation*}
where $CS$ stands for either the cosine or the sine function.  It is shown in \cite[Sec. 3.3]{S&Y} that
\begin{equation*}
 \widehat{f}_{CS}(y)  = \frac{1}{2\pi i} \int\limits_{(1/2)} \widetilde{f}(1-s) \Gamma(s) CS\left(\frac{\text{sgn}(y) \pi s}{2}\right) (2\pi |y|)^{-s} \dif s.
\end{equation*}
Applying the above transform, we deduce that
\begin{align}
\label{eq:3.30}
\begin{split}
 \int\limits_0^{\infty} h \left(Xx, n_1, n_2 \right) (C + S)\leg{2\pi k xX}{2n_1 n_2 a^2} \dif x =  \frac{1}{2\pi i X}  \int\limits_{(\varepsilon)} \check{h}\left(1-s; n_1, n_2 \right) \leg{n_1 n_2 a^2}{\pi |k| }^{s} \Gamma(s) (C + \text{sgn}(k)S)\left(\frac{\pi s}{2} \right) \dif s,
\end{split}
\end{align}
where
\begin{equation*}
\label{eq:3.31}
 \check{h}(s;y,z) = \int\limits_0^{\infty} h(x,y,z) x^s \frac{\dif x}{x}. 
\end{equation*}

Taking the Mellin transforms in the variables $n_1$ and $n_2$, the right-hand side of \eqref{eq:3.30} equals
\begin{equation*}
\frac 1X \leg{1}{2\pi i}^3  \int\limits_{(1)} \int\limits_{(1)} \int\limits_{(\varepsilon)} \widetilde{h}\left(1-s,u,v \right) \frac{1}{n_1^{u} n_2^{v}} \leg{n_1 n_2 a^2}{\pi |k| }^{s} \Gamma(s) (C + \text{sgn}(k)S)\left(\frac{\pi s}{2} \right) \dif s\, \dif u \, \dif v,
\end{equation*}
  where
\begin{equation*}
\widetilde{h}(s,u,v) = \int_{\mr_{+}^{3}} h(x,y,z) x^{s} y^{u} z^{v} \frac{\dif x}{x} \frac{\dif y}{y} \frac{\dif z}{ z}.
\end{equation*}

Integrating by parts implies that for $\Re(s)$, $\Re(u)$, $\Re(v) >0$ and any integers $E_j \geq 0$, $1 \leq j \leq 3$,
\begin{equation}
\label{eq:h}
|\widetilde {h}(s,u,v)| \ll \frac{X^{\Re(s)} Y^{\Re(u)+\Re(v)}}{|uvs| (1+|s|)^{E_1} (1+|u|)^{E_2} (1 + |v|)^{E_3}}.
\end{equation}

  Applying the above bound in \eqref{eq:S1} leads to
\begin{equation} \label{eq:S31}
\begin{split}
 S_3(h) = \frac{1}{2} \sum_{\substack{a \leq Z \\ (a,2)=1}} & \frac{\mu(a)}{a^2} \sum_{k \neq 0}
\sum_{(n_1,2a)=1} \sum_{(n_2,2a)=1}
\frac{(-1)^kG_{k}(n_1 n_2)\mu(n_1)\mu(n_2)}{n_1 n_2} \\
& \times \leg{1}{2\pi i}^3  \int\limits_{(\varepsilon)} \int\limits_{(\varepsilon)} \int\limits_{(\varepsilon)} \widetilde{h}\left(1-s,u,v \right) \frac{1}{n_1^{u} n_2^{v}} \leg{n_1 n_2 a^2}{\pi |k|  }^{s} \Gamma(s) (C + \text{sgn}(k)S)\left(\frac{\pi s}{2} \right) \dif s \, \dif u \, \dif v.
\end{split}
\end{equation}

 Note that by \eqref{eq:h} and the estimation (see \cite[p. 1107]{S&Y}),
\begin{equation}
\label{Gammabound}
 \Big| \Gamma(s) (C \pm S)\Big( \frac {\pi s}{2} \Big) \Big| \ll |s|^{\Re(s)-1/2},
\end{equation}
 the integral over $s$ in \eqref{eq:S31} may be taken over any vertical lines between $0$ and $1$ and
the integrals over $u, v$ in \eqref{eq:S31} may be taken over any vertical lines between $0$ and $2$.  \newline

Hence we arrive at
\begin{equation}
\label{eq:S31-1}
\begin{split}
 S_3(h) = \frac{1}{2} \sum_{\substack{a \leq Z \\ (a,2)=1}} & \frac{\mu(a)}{a^2}\leg{1}{2\pi i}^3  \int\limits_{(\varepsilon)} \int\limits_{(\varepsilon)} \int\limits_{(\varepsilon)} \widetilde{h}\left(1-s,u,v \right) \sum_{k \neq 0} \sum_{(n_1,2a)=1} \\
 & \times
\sum_{(n_2,2a)=1}
\frac{(-1)^kG_{k}(n_1 n_2)\mu(n_1)\mu(n_2)}{n_1 n_2}  \frac{1}{n_1^{u} n_2^{v}} \leg{n_1 n_2 a^2}{\pi |k|  }^{s} \Gamma(s) (C + \text{sgn}(k)S)\left(\frac{\pi s}{2} \right) \dif s \, \dif u \, \dif v.
\end{split}
\end{equation}

   Now, we write $k = \iota k_1k^2_2$ with $\iota \in \{ \pm 1 \}$ and $k_1>0$ square-free.  We write $f(k)=G_{\iota k}(n_1n_2)/|k|^{s}$. It follows from  \cite[(5.15)]{Young2} that
\begin{align*}
   \sum^{\infty}_{k=1}(-1)^{k} f(k)= (2^{1-2s}-1)\sumstar_{\substack{k_1  \geq 1 \\ (k_1, 2) = 1}}
   \sum^{\infty}_{k_2=1}f(k_1k^2_2) +\sumstar_{\substack{k_1 \geq 1 \\ 2| k_1}}\sum^{\infty}_{k_2=1}f(k_1k^2_2).
\end{align*}

    We apply the above relation to recast the expression given in \eqref{eq:S31-1} for $S_3(h)$ as
\begin{align}
\label{S3decomp}
     S_3(h) &=   \sum_{\iota =\pm 1} (S^{\iota}_{3,1}(h)+S^{\iota}_{3,2}(h)),
\end{align}
  where
\begin{align*}
 S^{\iota}_{3,1}(h) =&  \frac{1}{2} \sum_{\substack{a \leq Z \\ (a,2)=1}} \frac{\mu(a)}{a^2} \sumstar_{\substack{k_1  \geq 1 \\ (k_1, 2) = 1}} \leg{1}{2\pi i}^3
\\
& \times  \int\limits_{(1+2\varepsilon)} \int\limits_{(1+2\varepsilon)} \int\limits_{(1/2+\varepsilon)} \widetilde{h}\left(1-s,u,v \right) \Gamma(s) (2^{1-2s}-1) (C + \iota S)\left(\frac{\pi s}{2} \right)\leg{a^2}{\pi k_1  }^{s}Z(u-s, v-s, s;a, \iota k_1) \dif s \, \dif u \, \dif v, \\
S^{\iota}_{3,2}(h) =&  \frac{1}{2} \sum_{\substack{a \leq Z \\ (a,2)=1}} \frac{\mu(a)}{a^2} \sumstar_{\substack{k_1 \geq 1 \\ 2| k_1}} \leg{1}{2\pi i}^3
\\
&  \times \int\limits_{(1+2\varepsilon)} \int\limits_{(1+2\varepsilon)} \int\limits_{(1/2+\varepsilon)} \widetilde{h}\left(1-s,u,v \right)  \Gamma(s)  (C + \iota S)\left(\frac{\pi s}{2} \right)\leg{a^2}{\pi k_1  }^{s}Z(u-s, v-s, s;a, \iota k_1) \dif s \, \dif u \, \dif v.
\end{align*}
 Here the function $Z$ is defined in \eqref{eq:Z}.  We make a change of variables to rewrite $S^{\iota}_{3,1}(h)$ as
\begin{multline*}
S^{\iota}_{3,1}(h)  = \frac{1}{2} \sum_{\substack{a \leq Z \\ (a,2)=1}} \frac{\mu(a)}{a^2} \sumstar_{\substack{k_1  \geq 1 \\ (k_1, 2) = 1}}
\left(\frac{1}{2\pi i}\right)^3  \int\limits_{(1/2+\varepsilon)} \int\limits_{(1/2+\varepsilon)}
\int\limits_{(1/2+\varepsilon)} {\widetilde h}(1-s,u+s,v+s)\Gamma(s) (2^{1-2s}-1)
\\
\hskip 1in \times   (C + \iota S) \left( \frac{\pi s}{2}\right)
 \left(\frac{a^2}{\pi k_1}\right)^s
Z(u, v, s;a, \iota k_1) \dif s \, \dif u \, \dif v.
\end{multline*}

  We split the sum over $k_1$ into two terms according to whether $k_1 \le K$ or not, with $K$ to be optimized later.  If $k_1 \le K$, we move the lines of integration to $\Re(s)= c_1$ for some $1/2<c_1<1$, $\Re(u)=\Re(v)=\varepsilon$. Otherwise, we move
the lines of integration to $\Re(s)=c_2$ for some $c_2>1$, $\Re(u)=\Re(v)=\varepsilon$.
We encounter no poles in either case. Applying Lemma \ref{lemma:Z} and the bound in \eqref{Linversebound} yields
\begin{equation*}
Z(u, v,s;a, \iota k_1)
\ll |L^{-1}_a(\tfrac 12+u, \chi_{\iota k_1}) L^{-1}_a(\tfrac 12+v, \chi_{\iota k_1})| \ll d^2 (a) \left(|L^{-1}(\tfrac 12+u, \chi_{\iota k_1})|^2 +
|L^{-1}(\tfrac 12+v, \chi_{\iota k_1})|^2\right).
\end{equation*}
The above and \eqref{eq:h} with $E_1=E_2=E_3=1$, together with \eqref{Gammabound} and the symmetry in $u$ and $v$ give that the terms with $k_1 \le K$ contribute
\begin{equation} \label{eq:firstbd}
\begin{split}
\ll X^{1-c_1} & Y^{2c_1+2\varepsilon}  \sum_{a\le Z} \frac{d(a)}{a^{2-2c_1}}  \\
& \int\limits_{(c_1)} \int\limits_{(\varepsilon)}\int\limits_{(\varepsilon)}  \sumstar_{1 \leq k_1\le K}\frac{1}{k_1^{c_1}} |L(\tfrac 12+u, \chi_{\iota k_1})|^{-2} \frac{ |s|^{\Re(s)-1/2} \dif u \, \dif v \, \dif s}{|1-s|(1+|1-s|)|u+s|(1+|u+s|)|v+s|(1+|v+s|)}.
\end{split}
\end{equation}

  We further apply Lemma \ref{lem:Linversebound} to get
\begin{align*}
 \sumstar_{1 \leq k_1 \le K}\frac{1}{k_1^{c_1}} |L(\tfrac 12+u, \chi_{\iota k_1})|^{-2} \ll K^{1-c_1+\varepsilon}(1+|t|)^{\varepsilon} \ll K^{1-c_1+\varepsilon}\left ((1+|u+s|)^{\varepsilon}+|s|^{\varepsilon} \right ).
\end{align*}

Applying the above in \eqref{eq:firstbd}, we infer that the terms with $k_1 \le K$ contribute
\begin{align*}
\ll X^{1-c_1} Y^{2c_2+2\varepsilon} K^{1-c_1+\varepsilon}Z^{2c_1-1+\varepsilon}.
\end{align*}

Similarly, the contribution from the complementary terms with $k_1 >K$  is
\begin{align*}
\ll X^{1-c_2} Y^{2c_2+\varepsilon} K^{1-c_2+\varepsilon}Z^{2c_2-1+\varepsilon}.
\end{align*}
  We now balance these contributions by setting $K=Y^2Z^2/X$ so that
\begin{align*}
 X^{1-c_1} Y^{2c_1} K^{1-c_1}Z^{2c_1-1}= X^{1-c_2} Y^{2c_2} K^{1-c_2}Z^{2c_2-1}.
\end{align*}
Now taking $c_1=1/2+\varepsilon$ yields the bound
\begin{align*}
S^{\iota}_{3,1}(h) \ll (XYZ)^{\varepsilon}Y^2Z.
\end{align*}

  Note that $S^{\iota}_{3,2}(h)$ satisfies the above upper bound as well.
Hence, we conclude from \eqref{S2}, \eqref{S10}, \eqref{S3decomp} and the above that
\begin{align*}
 S(h) = \frac{4}{\pi^2}  XY \widetilde{h}_1(1,1) Z_2(1)  + O\left( (XY)^{1 + \varepsilon} Z^{-1+ \varepsilon}+XY^{1/2+\varepsilon}+(XYZ)^{\varepsilon}Y^2Z   \right).
\end{align*}

Now \eqref{S} follows upon setting $Z=(X/Y)^{1/2}$, completing the proof of Theorem \ref{meansquare}.

\vspace*{.5cm}

\noindent{\bf Acknowledgments.}   P. G. was supported in part by NSFC Grant 11871082 and L. Z. by the Faculty Silverstar Grant PS65447 at the University of New South Wales.  The authors would also like to thank the anonymous referee for his/her careful and prompt inspection of the paper.

\bibliography{biblio}
\bibliographystyle{amsxport}

\end{document}